\theoremstyle{plain}%
 \newtheorem{theorem}{Theorem}
\theoremstyle{remark}
\theoremstyle{definition}
\newtheorem{example}{Example}
\begin{document}

\begin{center}
{\large Three-parameter generalizations of formulas due to Guillera}

 \ 

{\textsc{John M. Campbell}} 

 \ 

\end{center}

\begin{abstract}
 Guillera has introduced remarkable series expansions for $\frac{1}{\pi^2}$ of convergence rates $-\frac{1}{1024}$ and $-\frac{1}{4}$ via the 
 Wilf--Zeilberger method. Through an acceleration method based on Zeilberger's algorithm and related to Chu and Zhang's series 
 accelerations based on Dougall's ${}_{5}H_{5}$-series, we introduce and prove three-parameter generalizations of Guillera's formulas. 
 We apply our method to construct rational, hypergeometric series for $\frac{1}{\pi^2}$ that are of the same convergence rates as 
 Guillera's series and that have not previously been known. 
\end{abstract}

\noindent {\footnotesize {\emph{MSC:} 33F10}}

\vspace{0.1in}

\noindent {\footnotesize {\emph{Keywords:} Ramanujan-type series, Zeilberger's algorithm, series acceleration, hypergeometric series}}

\section{Introduction}
 A groundbreaking development in the areas of mathematics influenced by Ramanujan is given by Guillera's discoveries on series for $ 
 \frac{1}{\pi^2}$ \cite{Guillera2003,Guillera2006,Guillera2008,Guillera2011,Guillera2002}. This is evidenced by the extent of the influence of 
 Guillera's formulas on subsequent work related to series for $\frac{1}{\pi^2}$; see 
 \cite{Almkvist2009,BaruahBerndt2009,ChenChu2021q,Chu2023,Chu2011,Chu2018,Chu2021Infinite,ChuZhang2014,He2016,LiChu2024,
WeiWangDai2015,Zudilin2007More,Zudilin2007Quadratic} and many related references. While Ramanujan's series for $\frac{1}{\pi}$ 
 \cite[pp.\ 352--354]{Berndt1994} \cite{Ramanujan1914} are well understood to be closely related to modular forms and the 
 behavior of elliptic integrals and theta functions \cite{BorweinBorwein1987}, the theory underlying Guillera's series for 
 $\frac{1}{\pi^2} $ is often seen as a relative mystery \cite[\S7.2]{BaileyBorweinCalkinGirgensohnLukeMoll2007} \cite{Guillera2011}, due 
 to the ``black box'' nature of the Wilf--Zeilberger (WZ) method 
 \cite{PetkovsekWilfZeilberger1996} employed in the derivation of such series. This motivates the development of techniques that 
 could give light to Guillera's formulas for $\frac{1}{\pi^2}$ and to how such formulas may be extended. This provides the main purpose 
 of our paper. 

 Let the $\Gamma$-function be defined by an Euler integral so that $\Gamma(x)$ $:=$ $ \int_{0}^{\infty} u^{x-1} $ $ e^{-u} $ $ du$ 
 for $\Re(x) > 0$, with the analytic continuation for $\Gamma$ extending the definition to complex numbers apart from negative 
 integers, via the relation $\Gamma(z) = \frac{ \Gamma(z+1) }{z}$. We may then let the Pochhammer symbol be defined so that 
 $ (a)_k := \frac{\Gamma (a+k)}{\Gamma (a)}$. In this direction, we are to make use of the notational convention 
\begin{equation*}
 \left[ \begin{matrix} \alpha, \beta, \ldots, \gamma \vspace{1mm} \\ 
 A, B, \ldots, C \end{matrix} \right]_{k} = \frac{ (\alpha)_{k} (\beta)_{k} 
 \cdots (\gamma)_{k} }{ (A)_{k} (B)_{k} \cdots (C)_{k}}. 
\end{equation*}
 Out of Ramanujan's 17 series for $\frac{1}{\pi}$, the simplest two such series (in terms of having a slower absolute convergence rate) 
 are such that 
\begin{align}
 \frac{4}{\pi} & = \sum_{k=0}^{\infty} 
 \left( \frac{1}{4} \right)^{k} 
 \left[ \begin{matrix} 
 \frac{1}{2}, \frac{1}{2}, \frac{1}{2} \vspace{1mm} \\ 
 1, 1, 1 \end{matrix} \right]_{k} 
 (6k+1) \ \text{and} \label{Ramanujan1} \\
 \frac{8}{\pi}
 & = \sum_{k=0}^{\infty} \left( -\frac{1}{4} \right)^{k}
 \left[ \begin{matrix} 
 \frac{1}{4}, \frac{1}{2}, \frac{3}{4} \vspace{1mm} \\ 
 1, 1, 1 \end{matrix} \right]_{k} 
 (20 k + 3). \label{Ramanujan2}
\end{align}
 This leads us to consider how rational, hypergeometric, and fast converging series for higher powers of $\frac{1}{\pi}$ 
 could be obtained, in relation to the known derivations of \eqref{Ramanujan1}--\eqref{Ramanujan2} or otherwise. 

 It was shown by Glaisher in 1905 \cite{Glaisher1905} that 
\begin{equation}\label{Glaisherseries}
 \frac{8}{\pi^2} 
 = \sum_{k=0}^{\infty}
 \left[ \begin{matrix} 
 -\frac{1}{2}, -\frac{1}{2}, -\frac{1}{2}, -\frac{1}{2} \vspace{1mm} \\ 
 1, 1, 1, 1 \end{matrix} \right]_{k} 
 (1 - 4k), 
\end{equation}
 noting the convergence rate of $1$ in \eqref{Glaisherseries}. The first known hypergeometric series for $\frac{1}{\pi^2}$ with 
 algebraic summands and absolute convergence rates strictly less than that of Glaisher's series in \eqref{Glaisherseries} were 
 introduced by Guillera in 2002 \cite{Guillera2002} and are such that 
\begin{align}
 \frac{32}{\pi^2} 
 & = \sum_{k = 0}^{\infty} 
 \left( \frac{1}{16} \right)^{k} 
 \left[ \begin{matrix} 
 \frac{1}{4}, \frac{1}{2}, \frac{1}{2}, \frac{1}{2}, \frac{3}{4} \vspace{1mm} \\ 
 1, 1, 1, 1, 1 \end{matrix} \right]_{k} 
 \left( 120 k^2 + 34 k + 3 \right) \ \text{and} \label{firstknown} \\ 
 \frac{128}{\pi^2}
 & = \sum_{k = 0}^{\infty} \left( -\frac{1}{1024} \right)^{k} 
 \left[ \begin{matrix} 
 \frac{1}{2}, \frac{1}{2}, \frac{1}{2}, \frac{1}{2}, \frac{1}{2} \vspace{1mm} \\ 
 1, 1, 1, 1, 1 \end{matrix} \right]_{k} 
 \left( 820 k^2 + 180 k + 13 \right). \label{secondknown}
\end{align}
 In contrast to the classical series due to Glaisher in \eqref{Glaisherseries}, 
 which may be evaluated according to the partial sum identity such that 
 \begin{equation*}
 \sum_{k=0}^{n} 
 \left[ \begin{matrix} 
 -\frac{1}{2}, -\frac{1}{2}, -\frac{1}{2}, -\frac{1}{2} \vspace{1mm} \\ 
 1, 1, 1, 1 \end{matrix} \right]_{k} 
 (1 - 4k) = 
 \frac{(n+1)^4 \left(8 n^2+4 n+1\right) \Gamma^4 \left(\frac{2 n+1}{2} \right)}{\pi ^2 \Gamma^{4} (n+2)}, 
\end{equation*}
 the partial sums of Guillera's series as in \eqref{firstknown}--\eqref{secondknown} cannot be evaluated in closed form. The 
 Guillera series in \eqref{firstknown}--\eqref{secondknown} were proved with the use of a WZ pair $(F, G)$, for bivariate and 
 hypergeometric functions $F = F(n, k)$ and $G = G(n, k)$, determined by Guillera via the {\tt EKHAD} package, and with the use of a 
 telescoping argument to obtain that 
\begin{equation}\label{sumGconstant}
 \sum_{n=0}^{\infty} G(n, k) = \sum_{n=0}^{\infty} G(n, k+1), 
\end{equation}
 so that an application of Carlson's theorem yields the constancy of either side of \eqref{sumGconstant} for complex $k$. This 
 approach was also applied by Guillera \cite{Guillera2006} to prove that 
\begin{equation}\label{mainGuillera}
 \frac{8}{\pi^2} 
 = \sum_{k = 0}^{\infty} 
 \left( -\frac{1}{4} \right)^{k} 
 \left[ \begin{matrix} 
 \frac{1}{2}, \frac{1}{2}, \frac{1}{2}, \frac{1}{2}, \frac{1}{2} \vspace{1mm} \\ 
 1, 1, 1, 1, 1 \end{matrix} \right]_{k} 
 \left( 20 k^2 + 8 k + 1 \right). 
\end{equation}
 Using an acceleration method related to the work of Wilf \cite{Wilf1999} and of Chu and Zhang \cite{ChuZhang2014}, we introduce 
 infinite families of generalizations of both \eqref{secondknown} and \eqref{mainGuillera}. As discussed in Section 
 \ref{sectionBackground}, our generalizations are not equivalent to the accelerations obtained by Chu and Zhang \cite{ChuZhang2014} 
 from Dougall's ${}_{5}H_{5}$-sum. We apply our method to obtain series for $\frac{1}{\pi^2}$ that have not previously been 
 known, including 
\begin{align}
 -\frac{64}{\pi^2} = & \sum_{k = 0}^{\infty} 
 \left( -\frac{1}{4} \right)^{k} 
 \left[ \begin{matrix} 
 \frac{3}{2}, \frac{3}{2}, \frac{3}{2}, \frac{3}{2}, \frac{3}{2} \vspace{1mm} \\ 
 1, 1, 1, 1, 2 \end{matrix} \right]_{k} 
 \left( 20 k^2+24 k+9 \right) \label{motivating} 
\end{align}
 and 
\begin{align}
\begin{split}
 -\frac{8}{\pi^2} 
 = & \sum_{k=0}^{\infty} 
 \left( -\frac{1}{1024} \right)^{k} 
 \left[ \begin{matrix} 
 -\frac{1}{2}, -\frac{1}{2}, -\frac{1}{2}, \frac{1}{2}, \frac{1}{2} \vspace{1mm} \\ 
 1, 1, 1, 1, 1 
 \end{matrix} \right]_{k} \times \\ 
 & \ \ \ \ \ \ \ \big( 6560 k^5-528 k^4+176 k^3+8 k^2-6 k-1 \big). 
\end{split}\label{motivating2}
\end{align}

\section{Background}\label{sectionBackground}
 Chu and Zhang's acceleration method \cite{ChuZhang2014} relies on 
 recurrences for the unilateral sum 
\begin{equation*}
 \Omega(a;b,c,d,e) := 
 \sum_{k = 0}^{\infty} 
 \left( a + 2 k \right) 
 \left[ \begin{matrix} 
 b, c, d, e \vspace{1mm} \\ 
 1 + a - b, 1 - a - c, 1 - a - d, 1 + a - e \end{matrix} \right]_{k} 
\end{equation*}
 corresponding to the well poised ${}_{5}H_{5}$-series given by Dougall \cite{Dougall1907}. Chu and Zhang introduced a 
 recurrence of the form 
\begin{equation}\label{Omegarecurrence}
 \Omega(a;b,c,d,e) 
 = R_{1} + R_{2} \, \Omega(a+2;b,c+1,d+1,e+1) 
\end{equation}
 for rational functions $R_{1}$ and $R_{2}$ \cite[Lemma 7]{ChuZhang2014}, with \eqref{Omegarecurrence} having been derived via 
 Abel's lemma on summation by parts. The iterative application of \eqref{Omegarecurrence} was then applied to prove that 
\begin{align*}
 \Omega(a;b,c,d,e)
 & = \sum_{k=0}^{\infty} \frac{(-1)^k}{ \left( 1 + a - b \right)_{2k} } 
 \alpha_{k}(a;b,c,d,e) \\ 
 & \times 
 \left[ \begin{matrix} 
 c, d, e, 1 + a - b - c, 1 + a - b - d, 1 + a - b - e \vspace{1mm} \\ 
 1 + a - c, 1 + a - d, 1 + a - e, 1 + 2a - b - c - d - e \end{matrix} \right]_{k} 
\end{align*}
 for a specified rational function $ \alpha_{k}(a;b,c,d,e) $ and for $\Re(1 + 2a - b - c - d - e) > 0$ \cite[Theorem 9]{ChuZhang2014}. It 
 was claimed by Chu and Zhang that the $a = b = c = d = e = \frac{1}{2}$ case of this result yields Guillera's formula in 
 \eqref{mainGuillera} and that the case whereby $a = b = \frac{3}{2}$ and $c = d = e = \frac{1}{2}$ yields 
\begin{equation}\label{claimedChuZhang}
 \frac{128}{\pi^2} = 
 \sum_{k=0}^{\infty} \left( -\frac{1}{4} \right)^{k} 
 \left[ \begin{matrix} 
 \frac{1}{2}, \frac{1}{2}, \frac{1}{2}, \frac{1}{2}, \frac{1}{2} \vspace{1mm} \\ 
 1, 2, 2, 2, 2 \end{matrix} \right]_{k} 
 \left( 20 k^2 + 32 k + 13 \right). 
\end{equation}
 According to the given definition for $ \Omega(a; b, c, d, e)$, the summand associated with $ \Omega(a; b, c, d, e)$ for $a = b = c = d 
 = e = \frac{1}{2}$ reduces to $ \frac{\left(2 k + \frac{1}{2} \right) \left(\frac{1}{2}\right)_k^4}{ (0)_k^2 (1)_k^2}$, but the Pochhammer 
 symbol $(0)_{k}$ vanishes for $k \in \mathbb{N}$, and similarly for the combination of values for $a$, $b$, $c$, $d$, and $e$ 
 purportedly yielding \eqref{claimedChuZhang}. This leads us to consider how the techniques of Chu and Zhang could be modified 
 so as to obtain infinite families of generalizations of \eqref{mainGuillera}. In this direction, we are to make use of an acceleration based 
 on a summand inequivalent to the summand for $\Omega(a,b,c,d,e)$ for any combination of arguments, so that our generalizations 
 of Guillera's formulas in \eqref{mainGuillera} and \eqref{secondknown} are not equivalent to the accelerations from Chu and Zhang's 
 work \cite{ChuZhang2014}. 

 The key to Wilf's acceleration method \cite{Wilf1999} has to do with hypergeometric summands that satisfy first-order, holonomic 
 recurrences and that produce sums that satisfy first-order inhomogeneous recurrences. As emphasized in our recent work on Wilf's 
 method \cite{CampbellLevrie2024}, it is only in exceptional cases that Zeilberger's algorithm produces a first-order recurrence of the 
 desired form. 

 Let $F(n, k)$ be a bivariate, hypergeometric function. Let $G(n ,k)$ denote the companion to $F(n, k)$ produced by Zeilberger's 
 algorithm, writing $G(n, k) = R(n, k) F(n, k)$ for a rational function $R(n ,k)$. Suppose that $F(n, k)$ satisfies a first-order, holonomic 
 difference equation, writing 
\begin{equation}\label{requiredholonomic}
 p_{1}(n) F(n + r, k) + p_{2}(n) F(n, k) = G(n, k+1) - G(n, k), 
\end{equation}
 for fixed polynomials $p_{1}$ and $p_{2}$, with $r = 1$ for the purposes of this paper. To apply Wilf's method \cite{Wilf1999}, we are 
 to also require the condition such that 
\begin{equation}\label{requiredvanish}
 \lim_{m \to \infty} G(n, m) = 0. 
\end{equation}
 With this assumption, by writing $f(n) = \sum_{k=0}^{\infty} F(n, k)$, a telescoping argument applied using the difference equation in 
 \eqref{requiredholonomic} produces 
\begin{equation}\label{recursionf}
 f(n) = -\frac{G(n, 0)}{p_{2}(n)} - \frac{ p_{1}(n) }{ p_{2}(n) } f(n + 1), 
\end{equation}
 again letting $r = 1$, and noting the contrast to the Chu--Zhang recurrence in \eqref{Omegarecurrence}. By iteratively applying 
 \eqref{recursionf}, if this produces a convergent summation, then the rate of convergence is determined by the quotient of the 
 leading coefficients of $p_{1}$ and $p_{2}$. We previously applied this approach using functions $F(n, k)$ such that $n$ appears 
 with positive coefficients within the initial arguments of inverted Pochhammer symbols, with input functions such as $F(n, k) = \frac{ 
 (a)_{k} (b)_{k} }{ (n)_{k}^{2} }$ and $F(n, k) = \frac{ (a)_{k} (b)_{k} }{ \left( n \right)_{k} (2n)_{k} }$ for free parameters $a$ and $b$. By 
 using an input function derived by shifting the $k$-argument of a special case of 
\begin{equation}\label{contrastChuZhang}
 \left( n + 2 k \right) 
 \left[ \begin{matrix} 
 b, c, d, e \vspace{1mm} \\ 
 1 + n - b, 1 +n - c, 1 + n - d, 1 + n - e \end{matrix} \right]_{k} 
\end{equation}
 in place of the summand 
\begin{equation}\label{inplaceof}
 \left( n + 2 k \right) 
 \left[ \begin{matrix} 
 b, c, d, e \vspace{1mm} \\ 
 1 + n - b, 1 - n - c, 1 - n - d, 1 + n - e \end{matrix} \right]_{k} 
\end{equation}
 corresponding to $\Omega(n;b,c,d,e) $ (noting the differing signs for the coefficients for $n$ among \eqref{contrastChuZhang} and 
 \eqref{inplaceof}), this has led us to construct infinite families of generalizations of Guillera's formulas in \eqref{secondknown} and 
 \eqref{mainGuillera} inequivalent to the results from Chu and Zhang \cite{ChuZhang2014}. In turn, this has led us to construct and 
 prove series expansions for $\frac{1}{\pi^2}$ that are of the same convergence rates as in \eqref{secondknown} and 
 \eqref{mainGuillera} and that have not previously been known. 

\subsection{Survey}
 A brief survey on known series for $\frac{1}{\pi^2}$ that are hypergeometric and algebraic and that have absolute convergence rates 
 strictly less than $1$ is given below. This emphasizes the originality of our series for $\frac{1}{\pi^2}$ and the motivation 
 surrounding these expansions. 

 In addition to the series for $\frac{1}{\pi^2}$ given above, a notable instance of a series satisfying the given conditions was 
 introduced in 2011 by Guillera \cite{Guillera2011} and is such that 
\begin{equation}\label{Guillera2764}
 \frac{ 48 }{\pi^2} 
 = \sum_{k = 0}^{\infty} 
 \left( \frac{27}{64} \right)^{k} 
 \left[ \begin{matrix} 
 \frac{1}{3}, \frac{1}{2}, \frac{1}{2}, \frac{1}{2}, \frac{2}{3} \vspace{1mm} \\ 
 1, 1, 1, 1, 1 \end{matrix} \right]_{k} 
 \left( 74 k^2 + 27 k + 3 \right). 
\end{equation}
 In addition to the formulas for $\frac{1}{\pi^2}$ in \eqref{mainGuillera} and \eqref{claimedChuZhang} proved by Chu and Zhang in 
 2014 \cite{ChuZhang2014} via $\Omega$-recurrences, such recurrences were also used to prove formulas for $\frac{1}{\pi^2}$ such as 
\begin{align}
 \frac{256}{\pi^2} 
 & = \sum_{k=0}^{\infty} 
 \left( -\frac{1}{4} \right)^{k}
 \left[ \begin{matrix} 
 \frac{1}{2}, \frac{1}{2}, \frac{1}{2}, \frac{3}{2}, \frac{3}{2} \vspace{1mm} \\ 
 1, 2, 2, 2, 2 \end{matrix} \right]_{k} 
 \left( 40k^3 + 108k^2 + 94 k + 27 \right), \label{improvecubic} \\
 \frac{32}{\pi^2} 
 & = \sum_{k=0}^{\infty} 
 \left( \frac{1}{16} \right)^{k} 
 \left[ \begin{matrix} 
 \frac{1}{4}, \frac{1}{2}, \frac{1}{2}, \frac{1}{2}, \frac{3}{4} \vspace{1mm} \\ 
 1, 1, 1, 1, 1 \end{matrix} \right]_{k} 
 \left( 120k^2 + 34k + 3 \right), \nonumber \\ 
 \frac{128}{\pi^2} 
 & = \sum_{k=0}^{\infty} 
 \left( \frac{1}{16} \right)^{k} 
 \left[ \begin{matrix} 
 -\frac{1}{4}, \frac{1}{4}, \frac{1}{2}, \frac{1}{2}, \frac{1}{2} \vspace{1mm} \\ 
 1, 1, 1, 2, 2 \end{matrix} \right]_{k} 
 \left( 120 k^2 + 118k + 13 \right), \nonumber \\ 
 \frac{256}{\pi^2} 
 & = \sum_{k=0}^{\infty} 
 \left( -\frac{1}{27} \right)^{k} 
 \left[ \begin{matrix} 
 \frac{1}{4}, \frac{1}{4}, \frac{1}{4}, \frac{1}{2}, \frac{3}{4}, \frac{3}{4}, \frac{3}{4} \vspace{1mm} \\ 
 1, 1, 1, 1, 1, \frac{4}{3}, \frac{5}{3} \end{matrix} \right]_{k} \times \nonumber \\ 
 & \ \ \ \ \ \ \ \left( 7168 k^4 + 8832 k^3 + 3376 k^2 + 492 k + 27 \right), \nonumber \ \text{and} \\ 
 \frac{256}{\pi^2} 
 & = \sum_{k=0}^{\infty} 
 \left( -\frac{1}{27} \right)^{k} 
 \left[ \begin{matrix} 
 \frac{1}{4}, \frac{1}{4}, \frac{1}{4}, \frac{1}{2}, \frac{3}{4}, \frac{3}{4}, \frac{3}{4} \vspace{1mm} \\ 
 1, 1, 1, 1, 1, \frac{4}{3}, \frac{5}{3} \end{matrix} \right]_{k} \times \nonumber \\
 & \ \ \ \ \ \ \ \left( 7168 k^4 + 8832 k^3 + 3376 k^2 + 492 k +27 \right). \nonumber 
\end{align}
 Since we are to generalize Guillera's series for $\frac{1}{\pi^2}$ of convergence rate $-\frac{1}{1024}$, we highlight the series 
\begin{equation*}
 \frac{2048}{\pi^2} 
 = \sum_{k = 0}^{\infty} 
 \left( - \frac{1}{1024} \right)^{k} 
 \left[ \begin{matrix} 
 -\frac{1}{2}, \frac{1}{2}, \frac{1}{2}, \frac{3}{2}, \frac{3}{2} \vspace{1mm} \\ 
 1, 1, 1, 2, 2 \end{matrix} \right]_{k} 
 \big( 1640 k^3 + 3476 k^2 + 2046 k + 207 \big) 
\end{equation*}
 and 
\begin{equation*}
 \frac{131072}{ 9 \pi^2} 
 = \sum_{k = 0}^{\infty} 
 \left( - \frac{1}{1024} \right)^{k} 
 \left[ \begin{matrix} 
 -\frac{1}{2}, \frac{1}{2}, \frac{1}{2}, \frac{5}{2}, \frac{5}{2} \vspace{1mm} \\ 
 1, 2, 2, 2, 2 \end{matrix} \right]_{k} 
 \big( 1640 k^3 + 4788 k^2 + 4614 k + 1475 \big) 
\end{equation*}
 from Chu and Zhang \cite{ChuZhang2014}, who also introduced series for $\frac{1}{\pi^2}$ of convergence rates $-\frac{16}{27}$ 
 and $\frac{27}{64}$. Recursions for $\Omega$-sums were further applied by Chu \cite{Chu2021Infinite} to reprove such past results. 
 A similar approach involving the Gould--Hsu inverse series relations was applied by Chu \cite{Chu2023} to prove series expansions 
 for $\frac{1}{\pi^2}$ of convergence rate $\frac{1}{16}$, and this includes the formulas such that 
\begin{align*}
 \frac{256}{3\pi^2} 
 & = \sum_{k=0}^{\infty} 
 \left( \frac{1}{16} \right)^{k} 
 \left[ \begin{matrix} 
 -\frac{1}{2}, \frac{1}{4}, \frac{1}{2}, \frac{3}{4}, \frac{3}{2} \vspace{1mm} \\ 
 1, 1, 1, 2, 2 \end{matrix} \right]_{k} 
 \left( 80k^3 + 148 k^2 + 80k + 9 \right) \ \text{and} \\
 \frac{512}{\pi^2} 
 & = \sum_{k=0}^{\infty} 
 \left( \frac{1}{16} \right)^{k} 
 \left[ \begin{matrix} 
 \frac{1}{2}, \frac{1}{2}, \frac{3}{4}, \frac{5}{4}, \frac{3}{2} \vspace{1mm} \\ 
 1, 1, 1, 2, 2 \end{matrix} \right]_{k} 
 \left( 240 k^3 + 532 k^2 + 336 k + 45 \right). 
\end{align*}
 Using a recursion for 
\begin{equation}\label{twofree}
 \left( x + \frac{y - 1}{2} \right) \sum_{k=0}^{\infty} 
 \left[ \begin{matrix} 
 x, x, x, x, x + \frac{y+1}{2}, 1 \vspace{1mm} \\ 
 x+y, x+y, x+y, x+y, x + \frac{y-1}{2} \end{matrix} \right]_{k}, 
\end{equation}
 Levrie and Campbell \cite{LevrieCampbell2022} proved that 
\begin{equation}\label{CLprevious}
 \frac{2^{15}}{ 3^{4} \pi^2 } 
 = \sum_{k = 0}^{\infty} 
 \left( -\frac{1}{4} \right)^{k} 
 \left[ \begin{matrix} 
 \frac{1}{2}, \frac{1}{2}, \frac{1}{2}, \frac{1}{2}, \frac{1}{2} \vspace{1mm} \\ 
 1, 3, 3, 3, 3 \end{matrix} \right]_{k} 
 \left( 20 k^2+56 k+41 \right). 
\end{equation}
 With regard to to two free parameters in \eqref{twofree}, our three-parameter generalization of Guillera's formula in improves upon the 
 acceleration for \eqref{twofree}, and our results as in \eqref{motivating} have not previously been known. Recently, Au \cite{Au2025} 
 introduced a technique relying on \emph{WZ seeds} to prove remarkable results on hypergeometric expansions for powers of $ 
 \frac{1}{\pi}$, and this includes Au's proof of the previously conjectured formula such that $$ \frac{12}{\pi^2} = \sum_{k 
 =0}^{\infty} 
 \left( \frac{4}{27} \right)^{k} 
 \left[ \begin{matrix} 
 \frac{1}{2}, \frac{1}{2}, \frac{1}{2}, \frac{1}{2}, \frac{1}{2}, \frac{1}{2}, \frac{1}{2} \vspace{1mm} \\ 
 \frac{7}{6}, \frac{5}{6}, 1, 1, 1, 1, 1 \end{matrix} \right]_{k} \left( 92 k^3 + 54k^2 + 12 k + 1 \right). $$ Observe that Guillera's series in 
 \eqref{mainGuillera} agrees, up to integer parameter differences of the Pochhammer symbols and up to the polynomial summand factor, 
 with the Chu--Zhang series in both \eqref{claimedChuZhang} and \eqref{improvecubic}. This motivates a full exploration as to how 
 series of this form can be derived, as in Section \ref{sectionMain} below. Also observe that the polynomial summand factor in 
 \eqref{improvecubic} is cubic, in contrast to the quadratic summand factors in both \eqref{mainGuillera} and 
 \eqref{claimedChuZhang}. This motivates how it could be possible to derive infinite families of generalizations of Guillera's formula 
 so as to obtain quadratic summand factors, as in Section \ref{sectionMain}. 

\section{Main results}\label{sectionMain}
 Experimentally, we have discovered that the application of Zeilberger's algorithm to $$ F(n, k) 
 := \left[ \begin{matrix} 
 a, a, a, a \vspace{1mm} \\ 
 1 + n - a, 
 1 + n - a, 1 + n - a, 
 1 + n - a \end{matrix} \right]_{k + b} (n + 2 k + 2 b) $$ yields a recurrence satisfying the required properties according to Wilf's 
 acceleration method \cite{Wilf1999}, noting that $F(n, k)$, as defined above, does not agree with the Chu--Zhang summands of the 
 form indicated in \eqref{inplaceof}. This has led us to prove the acceleration identity given in Theorem \ref{maintheorem} below. 

 As we later demonstrate, the $(a, b, n) = \big(\frac{1}{2}, 0, \frac{3}{2} \big)$ case of Theorem \ref{maintheorem} provides a copy of 
 Guillera's formula in \eqref{mainGuillera}. So, Theorem \ref{maintheorem} provides a three-parameter generalization of Guillera's 
 formula in \eqref{mainGuillera}. 

\begin{theorem}\label{maintheorem}
 Let 
\begin{multline*}
 \mathcal{R}(n, j) := 
 \frac{(n-2 a+1)^5}{(2 n-4 a+1) (2 n-4 a+2 j+1) (n-a+1)^4} \big( 10 a^2- 8 a b- 14 a j- \\ 
 14 a n-6 a+2 b^2+6 b j+6 b n+2 b+5 j^2+10 j n+4 j+5 n^2+4 n+1 \big).
\end{multline*}
 Then, for $F(n, k)$ as specified above, the series $ \sum_{k=0}^{\infty} F(n ,k) $ equals 
\begin{multline*}
 \sum_{j=0}^{\infty} 
 \text{{\footnotesize{ $ 
 \left( -\frac{1}{4} \right)^{j} 
 \left[ \begin{matrix} 
 n-2a+2, n-2a+2, n-2a+2, n-2a+2, n-2a+2 \vspace{1mm} \\ 
 n-2a+\frac{3}{2}, n - a + 2, n - a + 2, n - a + 2, n - a + 2 \end{matrix} \right]_{j-1} \times $ }}} \\ 
 \text{{\footnotesize{ $ \left[ \begin{matrix} 
 a, a, a, a \vspace{1mm} \\ 
 n - a + j + 1, n - a + j + 1, n - a + j + 1, n - a + j + 1 \end{matrix} \right]_{b} 
 \mathcal{R}(n, j) $ }}}
\end{multline*}
 for $a < \frac{2n+1}{4}$. 
\end{theorem}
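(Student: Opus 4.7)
The plan is to realize Theorem \ref{maintheorem} as an instance of the Wilf-style acceleration scheme described in Section \ref{sectionBackground}, with the specific choice of $F(n,k)$ indicated immediately before the theorem statement. First I would feed $F(n,k)$ into Zeilberger's algorithm and record the rational certificate $R(n,k)$ along with polynomials $p_{1}(n)$ and $p_{2}(n)$ for which $G(n,k)=R(n,k)F(n,k)$ satisfies the first-order holonomic relation \eqref{requiredholonomic} with $r=1$. The experimental remark preceding the theorem asserts that this recurrence has order one, and since the summand factor $(n+2k+2b)$ in $F(n,k)$ is analogous (up to the sign issues flagged in Section \ref{sectionBackground}) to a well-poised Dougall-type kernel, the certificate should indeed be a rational function whose $k$-dependence is linear in the denominator Pochhammer parameter shifts; this is routine to verify symbolically.

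Next I would check the boundary condition \eqref{requiredvanish}. Because $G(n,k)=R(n,k)F(n,k)$ and $F(n,k)$ behaves asymptotically like $k^{8a-4n-2}$ (from counting numerator and denominator Pochhammer symbols of $F$), the tail vanishes exactly when $a<\frac{2n+1}{4}$, which is the hypothesis of the theorem. Once this is in hand, telescoping \eqref{requiredholonomic} over $k\in\{0,1,\ldots\}$ yields the recursion \eqref{recursionf}, namely $f(n)=-G(n,0)/p_{2}(n)-(p_{1}(n)/p_{2}(n))f(n+1)$ for $f(n):=\sum_{k\geq 0}F(n,k)$.

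The heart of the proof is then iteration. Unfolding \eqref{recursionf} $N$ times produces
\begin{equation*}
 f(n)=\sum_{j=0}^{N-1}(-1)^{j+1}\,\frac{G(n+j,0)}{p_{2}(n+j)}\prod_{i=0}^{j-1}\frac{p_{1}(n+i)}{p_{2}(n+i)}+(-1)^{N}f(n+N)\prod_{i=0}^{N-1}\frac{p_{1}(n+i)}{p_{2}(n+i)}.
\end{equation*}
Since the convergence rate of the iteration is governed by the ratio of leading coefficients of $p_{1}$ and $p_{2}$, I would show that this ratio is $-\frac{1}{4}$ (matching the $(-1/4)^{j}$ in the stated series) and that, under $a<\frac{2n+1}{4}$, the remainder term tends to zero as $N\to\infty$. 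What remains is the bookkeeping identification: writing out $G(n+j,0)$ explicitly, collapsing the accumulated product $\prod_{i=0}^{j-1}p_{1}(n+i)/p_{2}(n+i)$ into Pochhammer ratios, and absorbing the factor $1/p_{2}(n+j)$ into the $j$-th term, one should recognize exactly the displayed Pochhammer quotients with shift parameters $n-2a+2$, $n-2a+\frac{3}{2}$, $n-a+2$, together with the $b$-dependent block $[a,a,a,a;n-a+j+1,\ldots]_{b}$ coming from the shift $k\mapsto k+b$ at the base of $F$, and the rational prefactor $\mathcal{R}(n,j)$.

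The main obstacle I expect is precisely this last identification: converting the product of the two Zeilberger polynomials $p_{1},p_{2}$ over $i=0,\ldots,j-1$ into the explicit Pochhammer form displayed in the theorem, and verifying that the quadratic-in-$(j,n)$ factor inside $\mathcal{R}(n,j)$ agrees with what emerges from combining $G(n+j,0)$ with $1/p_{2}(n+j)$. I would carry out this step by clearing all Pochhammer symbols to $\Gamma$-quotients, using $\Gamma(x+1)=x\,\Gamma(x)$ to telescope the products, and then comparing the residual rational factor in $(n,j,a,b)$ against $\mathcal{R}(n,j)$; the polynomial identity that arises is of fixed total degree, so it can be confirmed either by direct expansion or by checking sufficiently many specializations. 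Finally, the consistency check that $(a,b,n)=\big(\tfrac{1}{2},0,\tfrac{3}{2}\big)$ recovers Guillera's \eqref{mainGuillera} would serve both as a sanity test and as the link promised in the paragraph before the theorem.
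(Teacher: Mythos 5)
Your proposal follows essentially the same route as the paper's own proof: apply Zeilberger's algorithm to the stated $F(n,k)$ to obtain the certificate $R(n,k)$ and the polynomials $p_{1}(n)=(2a-n-1)^5$ and $p_{2}(n)=2(4a-2n-1)(a-n-1)^4$, verify \eqref{requiredvanish} under $a<\frac{2n+1}{4}$, telescope to get \eqref{recursionf}, and iterate, with the ratio of leading coefficients giving the $\left(-\frac{1}{4}\right)^{j}$ rate. The only quibble is that the asymptotic exponent $k^{8a-4n-2}$ you quote belongs to $G(n,k)=R(n,k)F(n,k)$ rather than to $F(n,k)$ itself (which decays like $k^{8a-4n-3}$), but this does not affect the conclusion.
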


\begin{proof}
 By applying Zeilberger's algorithm to $F(n, k)$, as specified, we obtain the rational certificate
\begin{multline*}
 R(n, k) = \frac{1}{2 b+2 k+n} 
 (a-n-1)^4 \big(10 a^2-8 a b-8 a k-14 a n - \\ 
 6 a + 2 b^2+4 b k+6 b n+2 b+2 k^2+6 k n+2 k+5 n^2+4 n+1 \big). 
\end{multline*}
 By then setting $ G(n, k) = R(n, k) F(n, k)$, with $ p_{1}(n) = (2 a-n-1)^5 $ and $ p_{2}(n) = 2 (4 a-2 n-1) (a-n-1)^4$, we obtain the 
 difference equation $$ p_{1}(n) F(n+1, k) + p_{2}(n) F(n ,k) = G(n, k+1) - G(n, k). $$ From the given definition for $G(n, k)$, if $a < 
 \frac{2n+1}{4}$, then the limiting relation in \eqref{requiredvanish} holds. In this case, again for 
\begin{equation}\label{fforFspecified}
 f(n) = \sum_{k=0}^{\infty} F(n, k), 
\end{equation}
 the difference equation in \eqref{recursionf} is then seen to hold. So, by writing 
\begin{equation}\label{displayr1r2}
 r_{1}(n) = -\frac{G(n, 0)}{p_{2}(n)} \ \ \ \text{and} \ \ \ r_{2}(n) = -\frac{p_{1}(n)}{p_{2}(n)} 
\end{equation}
 the iterative application of the recursion in \eqref{recursionf} produces the recursion such that $$ f(n) = \sum_{j=-1}^{m} \left( 
 \prod_{i=0}^{j} r_{2}(n+i) \right) r_{1}(n+j+1) + \left( \prod_{i=0}^{m+1} r_{2}(n+i) \right) f(n+m+2) $$ for positive integers $m$. 
 For $a < \frac{2n+1}{4}$, the final term vanishes as $m \to \infty$, and this gives us an equivalent version of the desired result. 
\end{proof}

 The $(a, b, n) = \big(\frac{1}{2}, 0, \frac{3}{2} \big)$ case of Theorem \ref{maintheorem} yields 
\begin{equation*}
 \sum_{k=0}^{\infty} 
 \left[ \begin{matrix} 
 \frac{1}{2}, \frac{1}{2}, \frac{1}{2}, \frac{1}{2} \vspace{1mm} \\ 
 2, 2, 2, 2 \end{matrix} \right]_{k} 
 (4k+3) = 
 \frac{1}{8} \sum_{j=0}^{\infty} 
 \left( -\frac{1}{4} \right)^{j} 
 \left[ \begin{matrix} 
 \frac{3}{2}, \frac{3}{2}, \frac{3}{2}, \frac{3}{2}, \frac{3}{2} \vspace{1mm} \\ 
 2, 2, 2, 2, 2 \end{matrix} \right]_{j} 
 \left( 20 j^2+48 j+29 \right). 
\end{equation*} 
 The partial sums for the left-hand series may be evaluated in closed form, with 
\begin{equation}\label{partialforGuillera}
 \sum_{k=0}^{n} 
 \left[ \begin{matrix} 
 \frac{1}{2}, \frac{1}{2}, \frac{1}{2}, \frac{1}{2} \vspace{1mm} \\ 
 2, 2, 2, 2 \end{matrix} \right]_{k} 
 (4k+3) 
 = 16 - \left[ \begin{matrix} 
 \frac{3}{2}, \frac{3}{2}, \frac{3}{2}, \frac{3}{2} \vspace{1mm} \\ 
 2, 2, 2, 2 \end{matrix} \right]_{n} \left( 8 n^2+20 n+13 \right). 
\end{equation}
 By setting $n \to \infty$ in \eqref{partialforGuillera}, we find that the $(a, b, n) = \big(\frac{1}{2}, 0, \frac{3}{2} \big)$ case of Theorem 
 \ref{maintheorem} yields $$ 16 - \frac{128}{\pi^2} = \frac{1}{8} \sum_{j=0}^{\infty} \left( -\frac{1}{4} \right)^{j} 
 \left[ \begin{matrix} 
 \frac{3}{2}, \frac{3}{2}, \frac{3}{2}, \frac{3}{2}, \frac{3}{2} \vspace{1mm} \\ 
 2, 2, 2, 2, 2 \end{matrix} \right]_{j} \left( 20 j^2+48 j+29 \right), $$ so that a reindexing argument gives us a copy of Guillera's 
 formula in \eqref{mainGuillera}. This leads us to introduce extensions of Guillera's formula and of the Chu--Zhang formula in 
 \eqref{claimedChuZhang} and of Campbell and Levrie's formula in \eqref{CLprevious}. 

\begin{example}
 For $(a, b, n) = \big( \frac{1}{2}, 1, \frac{3}{2} \big)$, 
 we obtain the Chu--Zhang series for $\frac{1}{\pi^2}$ in \eqref{claimedChuZhang}. 
\end{example}

\begin{example}
 For $(a, b, n) = \big( -\frac{1}{2}, 3, \frac{1}{2} \big)$, 
 we obtain Campbell and Levrie's series in 
 \eqref{CLprevious}. 
\end{example}

\begin{example}
 For $(a, b, n) = \big( -\frac{1}{2}, 4, \frac{1}{2} \big)$, 
 we obtain 
\begin{equation*}
 \frac{2^{19}}{ 5^{5} \pi^2 } 
 = \sum_{k = 0}^{\infty} 
 \left( -\frac{1}{4} \right)^{k} 
 \left[ \begin{matrix} 
 \frac{1}{2}, \frac{1}{2}, \frac{1}{2}, \frac{1}{2}, \frac{1}{2} \vspace{1mm} \\ 
 1, 4, 4, 4, 4 \end{matrix} \right]_{k} 
 \left( 4 k^2+16 k+17 \right). 
\end{equation*}
\end{example}

\begin{example}
 For $(a, b, n) = \big( -\frac{1}{2}, 5, \frac{1}{2} \big)$, 
 we obtain 
\begin{equation*}
 \frac{2^{31}}{ 5^4 7^4 \pi^2 }
 = \sum_{k = 0}^{\infty} 
 \left( -\frac{1}{4} \right)^{k} 
 \left[ \begin{matrix} 
 \frac{1}{2}, \frac{1}{2}, \frac{1}{2}, \frac{1}{2}, \frac{1}{2} \vspace{1mm} \\ 
 1, 5, 5, 5, 5 \end{matrix} \right]_{k} 
 \left( 20 k^2+104 k+145 \right). 
\end{equation*}
\end{example}

\begin{example}
 For $(a, b, n) = \big( -\frac{1}{2}, 0, \frac{7}{2} \big)$, 
 we obtain the motivating result highlighted in \eqref{motivating}. 
\end{example}

\begin{example}
 For $(a, b, n) = \big( -\frac{1}{2}, -1, \frac{5}{2} \big)$, 
 we obtain 
\begin{equation*}
 \frac{1024}{15 \pi^2} 
 = \sum_{k = 0}^{\infty} 
 \left( -\frac{1}{4} \right)^{k} 
 \left[ \begin{matrix} 
 \frac{5}{2}, \frac{5}{2}, \frac{5}{2}, \frac{5}{2}, \frac{5}{2} \vspace{1mm} \\ 
 1, 1, 1, 1, 3 \end{matrix} \right]_{k} 
 \left( 4 k^2+8 k+5 \right). 
\end{equation*}
\end{example}

\begin{example}
 For $(a, b, n) = \big( -\frac{1}{2}, -3, \frac{3}{2} \big)$, 
 we obtain 
\begin{equation*}
 -\frac{8192}{5 \pi ^2} 
 = \sum_{k = 0}^{\infty} 
 \left( -\frac{1}{4} \right)^{k} 
 \left[ \begin{matrix} 
 \frac{7}{2}, \frac{7}{2}, \frac{7}{2}, \frac{7}{2}, \frac{7}{2} \vspace{1mm} \\ 
 1, 1, 1, 1, 4 \end{matrix} \right]_{k} 
 \left( 20 k^2+56 k+49 \right). 
\end{equation*}
\end{example}

\begin{example}
 For $(a, b, n) = \big( -\frac{1}{2}, -3, \frac{5}{2} \big)$, we obtain 
\begin{equation*}
 \frac{2^{18}}{35 \pi^2} 
 = \sum_{k = 0}^{\infty} 
 \left( -\frac{1}{4} \right)^{k} 
 \left[ \begin{matrix} 
 \frac{9}{2}, \frac{9}{2}, \frac{9}{2}, \frac{9}{2}, \frac{9}{2} \vspace{1mm} \\ 
 1, 1, 1, 1, 5 \end{matrix} \right]_{k} 
 \left( 20 k^2+72 k+81 \right). 
\end{equation*}
\end{example}

 In addition to the above applications of Theorem \ref{maintheorem} given by the new series for $\frac{1}{\pi^2}$ highlighted above, 
 Theorem \ref{maintheorem} may be considered in relation to an inequivalent generalization of \eqref{mainGuillera} due to Guillera 
 \cite[Identity 8]{Guillera2011} whereby 
\begin{multline*}
 8 a \sum_{k=0}^{\infty} 
 \left[ \begin{matrix} 
 \frac{1}{2}, \frac{1}{2}, \frac{1}{2}, \frac{1}{2} \vspace{1mm} \\ 
 a+1, a+1, a+1, a+1 \end{matrix} \right]_{k} 
 \left( 4 k + 2 a + 1 \right) = \\ 
 \sum_{k=0}^{\infty} \left( -\frac{1}{4} \right)^{k} 
 \left[ \begin{matrix} 
 a + \frac{1}{2}, a + \frac{1}{2}, a + \frac{1}{2}, a + \frac{1}{2}, a + \frac{1}{2} \vspace{1mm} \\ 
 a + 1, a + 1, a + 1, a + 1, a + 1 \end{matrix} \right]_{k} 
 \big( 20 (k+a)^{2} + 8 (k + a) + 1 \big). 
\end{multline*}

\subsection{A double acceleration method}
 We rewrite \eqref{fforFspecified} as 
\begin{equation*}
 f(n, b) = \sum_{k=0}^{\infty} 
 \left[ \begin{matrix} 
 a, a, a, a \vspace{1mm} \\ 
 n - a + 1, n - a + 1, n - a + 1, n - a + 1 
 \end{matrix} \right]_{k + b} \big( n + 2 k + 2b \big), 
\end{equation*}
 taking $a$ as a parameter. 
 By determining a ``double'' recurrence for $f(n+1,b+1)$ (with respect to both arguments) 
 in terms of $f(n,b)$, as below, 
 the iterative application of our recurrence of this form
 yields a generalization of Guillera's formula in \eqref{secondknown}. 

\begin{theorem}\label{theoremdouble}
 Let $q_{1}(j) = -10 a^2+8 a b+22 a j+14 a n+28 a-2 b^2-10 b j-6 b n-12 b-13 j^2-16 j n-32 j-5 n^2-20 n-20$
 and $q_{2}(j) = 4 a-2 j-2 n-3$, and let 
 $$ s_{1}(n) = \frac{(-2 a+n+1)^5}{4 (4 a-2 n-1) (-a+n+1)^4} $$
 and
 $$ s_{2}(j) = \frac{(-2 a+j+n+2)^5 (2 b+3 j+n+4)}{(-a+b+2 j+n+3)^4}. $$ 
 Then 
\begin{multline*}
 f(n, b) = s_{1}(n) \sum_{j=-1}^{\infty} 
 \left( -\frac{1}{4} \right)^{j} 
 \frac{ q_{1}(j) + s_{2}(j) }{q_{2}(j)} \times \\ 
 \text{{\footnotesize{ $ 
 \left[ \begin{matrix} 
 n-2a + 2, n-2a + 2, n-2a + 2, n-2a + 2, n-2a + 2 \vspace{1mm} \\ 
 n - 2 a + \frac{3}{2}, 
 n - a + 2, n - a + 2, n - a + 2, n - a + 2 
 \end{matrix} \right]_{j} \left[ \begin{matrix} 
 a, a, a, a \vspace{1mm} \\ 
 n + j - a + 2 
 \end{matrix} \right]_{j + b + 1} $ }}} 
\end{multline*}
 for $a < \frac{2n+1}{4}$. 
\end{theorem}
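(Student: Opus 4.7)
The plan is to build on Theorem \ref{maintheorem} by combining its one-step recurrence in $n$ with an elementary reindexing identity in $b$, producing a combined recurrence that advances both arguments simultaneously and yields the desired acceleration after iteration.

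First, I would observe that the summand $F(n,k)$ defining $f(n,b)$ depends on $k$ and $b$ only through the combination $k+b$, so that $F(n, k, b+1) = F(n, k+1, b)$. Summing this identity over $k \geq 0$ gives the reindexing identity $f(n, b+1) = f(n, b) - F(n, 0, b)$. From the proof of Theorem \ref{maintheorem}, the Zeilberger certificate produces the one-step recurrence $f(n, b) = r_{1}(n, b) + r_{2}(n) f(n+1, b)$, where $r_{1}(n,b) = -G(n,0)/p_{2}(n)$ and $r_{2}(n) = -p_{1}(n)/p_{2}(n)$. Combining these by substituting $f(n+1, b) = f(n+1, b+1) + F(n+1, 0, b)$ gives the double-shift recurrence
\begin{equation*}
 f(n, b) = \bigl[\, r_{1}(n, b) + r_{2}(n)\, F(n+1, 0, b) \,\bigr] + r_{2}(n)\, f(n+1, b+1).
\end{equation*}

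Next, I would iterate this recurrence $m+1$ times, obtaining a finite sum plus a remainder $\bigl(\prod_{i=0}^{m} r_{2}(n+i)\bigr) f(n+m+1, b+m+1)$. Since $r_{2}(n) \to -\tfrac{1}{4}$ by the same leading-coefficient argument used in Theorem \ref{maintheorem}, the geometric factor $(-\tfrac{1}{4})^{j}$ appearing in the theorem emerges from the accumulated $r_{2}$-products, and the remainder vanishes as $m \to \infty$ under the hypothesis $a < \tfrac{2n+1}{4}$. Inside each bracket, the $r_{1}(n+j, b+j)$-contribution produces the polynomial numerator $q_{1}(j)$, while the $r_{2}(n+j) F(n+j+1, 0, b+j)$-contribution produces $s_{2}(j)$; the shared denominator $q_{2}(j) = 4a - 2j - 2n - 3$ arises from the factor $(2n - 4a + 2j + 1)$ already present in the rational certificate $\mathcal{R}$ of Theorem \ref{maintheorem}, and the global prefactor $s_{1}(n)$ absorbs the leading $n$-shift. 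The indexing from $j = -1$ is a reindexing convention that assigns the initial $r_{1}(n, b)$-contribution (before any $r_{2}$-shifts) to the $j = -1$ term.

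The principal obstacle is the algebraic bookkeeping: one must verify that, after substitution into the explicit formulas for $p_{1}$, $p_{2}$, $R$, and $F$, the shifted factors at $(n+j, b+j)$ consolidate precisely into the two displayed Pochhammer brackets and that the rational combination $r_{1}(n+j, b+j) + r_{2}(n+j) F(n+j+1, 0, b+j)$, together with the accumulated $r_{2}$-product, collapses into $s_{1}(n) (-\tfrac{1}{4})^{j} \tfrac{q_{1}(j) + s_{2}(j)}{q_{2}(j)}$ times those brackets. These simplifications follow mechanically from the explicit Zeilberger output of Theorem \ref{maintheorem} but are tedious and are most cleanly verified by symbolic computation.
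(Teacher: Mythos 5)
Your proposal follows essentially the same route as the paper's proof: the paper likewise combines the $n$-recurrence from Theorem \ref{maintheorem} with the $b$-step $f(n,b+1) - f(n,b) = -F(n,0,b)$ (which it obtains via Zeilberger's algorithm in the second argument, but which is exactly your elementary reindexing identity), merges them into a single double-shift recurrence $f(n,b) = r_4(n,b) + r_2(n,b)f(n+1,b+1)$, iterates, and discards the remainder as $m \to \infty$ under $a < \frac{2n+1}{4}$. The remaining differences are cosmetic bookkeeping, and both arguments leave the final symbolic consolidation into the displayed Pochhammer brackets implicit.
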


\begin{proof}
 Using Zeilberger's algorithm with respect to the latter argument of $f(n, b)$, we obtain the recursion such that $$ f(n, b+1) -f(n, b) 
 = -\left[ \begin{matrix} 
 a, a, a, a \vspace{1mm} \\ 
 n - a + 1, n - a + 1, n - a + 1, n - a + 1 
 \end{matrix} \right]_{b} (n+2b). $$ Rewriting the functions $r_{1}(n)$ and $r_{2}(n)$ in \eqref{displayr1r2} 
 as $r_{1}(n, b)$ and $r_{2}(n, b)$, respectively, 
 we also write 
 $$ r_{3}(n, b) = -\left[ \begin{matrix} 
 a, a, a, a \vspace{1mm} \\ 
 n - a + 1, n - a + 1, n - a + 1, n - a + 1 
 \end{matrix} \right]_{b} (n+2b). $$
 By then writing 
 $ r_{4}(n ,b) = r_{1}(n, b) + r_{2}(n, b) r_{3}(n+1, b), $
 the given recursions for $f(n, b)$ together yield 
\begin{equation}\label{togetheryield}
 f(n, b) = r_{4}(n, b) + r_{2}(n, b) f(n+1, b+1). 
\end{equation}
 The iterative application of \eqref{togetheryield} yields the recurrence 
\begin{multline*}
 f(n, b) 
 = \sum_{j=-1}^{m} \left( \prod_{i=0}^{j} r_{2}(n+i, b+i) \right) 
 r_{4}(n+j+1,b+j+1) + \\
 \left( \prod_{i=0}^{m+1} r_{2}(n+i, b+i) \right) f(n+m+2, b+m+2) 
\end{multline*}
 for positive integers $m$. Setting $m \to \infty$, the latter term vanishes, providing an equivalent version of the desired result. 
\end{proof}

 For $(a, b, n) = \big( \frac{1}{2}, 1, \frac{3}{2} \big)$, we obtain a copy of the Guillera formula in \eqref{secondknown}, so that 
 Theorem \ref{theoremdouble} provides a three-parameter generalization of Guillera's formula of convergence rate $-\frac{1}{1024}$. 
 This motivates our new series for $\frac{1}{\pi^2}$ of the same convergence rate, as below. 

\begin{example}
 Setting $ (a, b, n) = \big( \frac{1}{2}, 0, \frac{3}{2} \big)$ in Theorem \ref{theoremdouble}, we obtain the moviating result highlighted 
 in \eqref{motivating2}. 
\end{example}

\begin{example}
 Setting $ (a, b, n) = \big( \frac{1}{2}, 0, \frac{5}{2} \big) $ in Theorem \ref{theoremdouble}, we obtain that 
\begin{multline*}
 -\frac{1024}{\pi^2} 
 = \sum_{k=0}^{\infty} 
 \left( -\frac{1}{1024} \right)^{k} 
 \left[ \begin{matrix} 
 -\frac{1}{2}, -\frac{1}{2}, -\frac{1}{2}, -\frac{1}{2}, \frac{3}{2} \vspace{1mm} \\ 
 1, 1, 1, 1, 2 
 \end{matrix} \right]_{k} \times \\
 \big( 13120 j^6+34368 j^5+36144 j^4+17888 j^3+2956 j^2-540 j-99 \big). 
\end{multline*}
\end{example}

\begin{example}
 Setting $ (a, b, n) = \big( \frac{1}{2}, 0, \frac{7}{2} \big) $ 
 in Theorem \ref{theoremdouble}, we obtain that 
\begin{multline*}
 -\frac{64}{\pi ^2} 
 = \sum_{k=0}^{\infty} 
 \left( -\frac{1}{1024} \right)^{k} 
 \left[ \begin{matrix} 
 \frac{3}{2}, \frac{3}{2}, \frac{3}{2}, \frac{3}{2}, \frac{3}{2} \vspace{1mm} \\ 
 1, 1, 1, 1, 2 
 \end{matrix} \right]_{k} \times \\
 \frac{13120 j^6-2368 j^5-2320 j^4-4320 j^3-1620 j^2+972 j+729}{(2 j-3)^4 (2 j-1)^4}. 
\end{multline*}
\end{example}

 The $(a, b, n) = \big( 1, 0 ,2 \big)$ case of Theorem \ref{theoremdouble} gives us the formula $$ 64 \zeta(3) = \sum_{k = 
 0}^{\infty} 
 \left( -\frac{1}{1024} \right)^{k} 
 \left[ \begin{matrix} 
 1, 1, 1, 1, 1 \vspace{1mm} \\ 
 \frac{3}{2}, \frac{3}{2}, \frac{3}{2}, \frac{3}{2}, \frac{3}{2} 
 \end{matrix} \right]_{k} \big( 205 k^2 + 250 k + 77 \big)$$ for Ap\'{e}ry's constant $\zeta(3) = \frac{1}{1^3} + \frac{1}{2^3} + \cdots$ 
 due to Ambeberhan and Zeilberger \cite{AmdeberhanZeilberger1997}. Similarly, the $(a, b, n) = \big( \frac{1}{2}, 0, 1 \big)$ case of 
 Theorem \ref{theoremdouble} gives us a formula for $\zeta(3)$ given by Chu and Zhang \cite[Example 62]{ChuZhang2014}. This 
 motivates the application of Theorem \ref{theoremdouble} and variants and extensions of Theorem \ref{theoremdouble} in 
 the derivation of new series for constants other that $\frac{1}{\pi^2}$. For example, the $(a, b, n) = \big( 1, 1, \frac{5}{2} \big)$ case 
 of Theorem \ref{theoremdouble} yields 
\begin{multline*}
 \frac{ 81 \pi^2 }{16} 
 = \sum_{k=0}^{\infty} 
 \left( -\frac{1}{1024} \right)^{k} 
 \left[ \begin{matrix} 
 \frac{1}{2}, \frac{1}{2}, \frac{1}{2}, \frac{1}{2}, \frac{1}{2}, 1, 1, 1 \vspace{1mm} \\ 
 \frac{5}{4}, \frac{5}{4}, \frac{5}{4}, \frac{5}{4}, \frac{7}{4}, \frac{7}{4}, \frac{7}{4}, \frac{7}{4} 
 \end{matrix} \right]_{k} \times \\
 \big( 1640 k^6+5936 k^5+8738 k^4+6664 k^3+2762 k^2+587 k+50 \big). 
\end{multline*}

\subsection*{Acknowledgements}
   The author is grateful to acknowledge support from a Killam Postdoctoral Fellowship    from the Killam Trusts. The author thanks Paul   
  Levrie for useful feedback.

 \

{\textsc{John M. Campbell}} 

\vspace{0.1in}

Department of Mathematics and Statistics

Dalhousie University

6299 South St, Halifax, NS B3H 4R2

\vspace{0.1in}

{\tt jh241966@dal.ca}

\end{document}